\newenvironment{proof}{\noindent {\bf Proof:}}{\hfill $\Box$}
\newtheorem{lemma}{Lemma}
\newtheorem{remark}{Remark}
\def\vol{\mathrm{vol}}
\def\trace{\mathrm{trace}}
\title{\bf Minimum volume semialgebraic sets for robust estimation}
\begin{document}

\author{Fabrizio Dabbene$^1$, Didier Henrion$^{2,3,4}$}

\footnotetext[1]{CNR-IEIIT; Corso Duca degli Abruzzi 123, Torino; Italy. {\tt fabrizio.dabbene@polito.it}}
\footnotetext[2]{CNRS, LAAS, 7 avenue du colonel Roche, F-31400 Toulouse; France. {\tt henrion@laas.fr}}
\footnotetext[3]{Universit\'e de Toulouse, LAAS, F-31400 Toulouse; France}
\footnotetext[4]{Faculty of Electrical Engineering, Czech Technical University in Prague,
Technick\'a 2, CZ-16626 Prague, Czech Republic}

\date{ \today}

\maketitle

\begin{abstract}
Motivated by problems of uncertainty propagation and robust estimation
we are interested in computing a polynomial sublevel set of fixed degree
and minimum volume that contains a given semialgebraic set $\bf K$.
At this level of generality this problem is not tractable, even though
it becomes convex e.g. when restricted to nonnegative homogeneous polynomials.
Our contribution is to describe and justify
a tractable $L^1$-norm or trace heuristic for this problem, relying upon
hierarchies of linear matrix inequality (LMI) relaxations when $\bf K$
is semialgebraic, and simplifying to linear programming (LP) when $\bf K$ is
a collection of samples, a discrete union of points.
\end{abstract}

\section{Introduction}

In this paper, we consider the problem of computing reliable approximations of a given set $\bf K\subset {\mathbb R}^{n}$.
The set $\bf K$ is assumed to have a complicated shape (e.g.\ nonconvex, non-connected), expressed in terms of semialgebraic conditions,
and we seek for approximations which should i) be easy computable and ii) have a simple description.

The problem of deriving reliable approximations of overly complicated sets 
by means of simpler geometrical shapes has a long history, and it arises
in many research fields related to optimization, system identification and control.

In particular, \textit{outer bounding sets}, i.e.\ approximations that are guaranteed to contain the set ${\bf K}$,
are widespread in the technical literature, and they find several applications in robust control and filtering.
For instance,  set-theoretic state estimators for uncertain discrete-time nonlinear dynamic systems have been proposed in \cite{AlBrCa:05,DuWaPo:01,ElGCal:01,ShaTu:97}.
These strategies adopt a set-membership approach \cite{GaTeVi:99,Schweppe:73}, and construct compact sets that are guaranteed to bound the systems states which are consistent with the measured output and the norm-bounded uncertainty.
Outer approximation also arise in the context of robust fault detection problems
(e.g., see \cite{IBPAG:09}) and of reachability analysis of nonlinear and/or hybrid systems \cite{HwStTo:03,KurVar:00}.

Similarly, \textit{inner} approximations  are employed in nonlinear programming \cite{NesNem:94}, in the solution of design centering problems \cite{WojVla:93} and for fixed-order controller design \cite{HenLou:12}.

Recently, the authors of \cite{DaLaSh:10} have proposed an approach  based on randomization, which constructs convex approximations of generic nonconvex sets which are neither inner nor outer, but they enjoy some specific probabilistic properties. In this context, an approximation is considered to be reliable if it contains ``most'' of the points in $\bf K$ with prescribed high probability. The key tool in this framework is the generation of random samples inside $\bf K$, and the construction of a convex set containing these samples.

In all the approaches listed above, several geometric figures have been adopted as approximating sets. The application of ellipsoidal sets to the state estimation problem has been introduced in the pioneering work \cite{Schweppe:73}
and used by many different authors from then on; see, for example, \cite{DuWaPo:01,ElGCal:01}. The use of polyhedrons was proposed
in \cite{KunLyc:85} to obtain an increased estimation accuracy, while zonotopes have been also recently studied in \cite{AlBrCa:05,GuNgZa:03}.

More recent works, like for instance \cite{BeViLa:07,HenLou:12,MaLaBo:05}, employ sets defined by semialgebraic conditions.
In particular, in \cite{MaLaBo:05} the authors use polynomial sum-of-squares (SOS) programming,
a particular class of SDP, to address the problem of fitting given data with
a convex polynomial, seen as a natural extension of quadratic polynomials
and ellipsoids. Convexity of the polynomial is ensured by enforcing that 
its Hessian is matrix SOS, and volume minimisation is indirectly
enforced by increasing the curvature of the polynomial.
In \cite{BeViLa:07} the authors propose moment relaxations for the separation
and covering problems with semialgebraic sets, thereby also extending the
classical ellipsoidal sets used in data fitting problems.

Our contribution is to extend further these works to cope with volume minimization
of arbitrary (e.g. non-convex, non-connected, higher degree) semialgebraic
sets containing a given semialgebraic set (e.g. a union of points).
Since there is no analytic formula for the volume of a semialgebraic set,
in terms of the coefficients of the polynomials defining the set, we have
no hope to solve this optimization problem globally. Instead,
we describe and justify analytically and geometrically
a computationally tractable heuristic based on $L^1$-norm or trace
minimization.

\section{Problem statement}

Given a compact basic semialgebraic set
\[
{\bf K}:=\{x \in {\mathbb R}^n : g_i(x) \geq 0, \:i=1,2,\ldots,m\}
\]
where $g_i(x)$ are real multivariate polynomials,
we want to compute a polynomial sublevel set
\[
{\bf V}(q):=\{x \in {\mathbb R}^n : q(x) \leq 1\} \supset {\bf K}
\]
of minimum volume that contains $\bf K$. Set ${\bf V}(q)$ is
modeled by a polynomial $q$ belonging to ${\bf P}_d$,
the vector space of multivariate real polynomials of degree less than or equal to $d$.
In other words, we want to solve the following optimization problem:
\begin{equation}\label{opt}
\begin{array}{ll}
\inf_{q \in {\bf P}_d} & \vol\:{\bf V}(q) \\
\mathrm{s.t.} & {\bf K} \subset {\bf V}(q). \\
\end{array}
\end{equation}
In the above problem
\[
\vol\:{\bf V}(q):=\int_{{\bf V}(q)} dx = \int_{{\mathbb R}^n} I_{{\bf V}(q)}(x) dx
\]
is the volume or Lebesgue measure of set ${\bf V}(q)$ and $I_{\bf X}(x)$ is the indicator
function, equal to one when $x \in {\bf X}$ and zero otherwise.

Note that, typically, the set $\bf K$ has a complicated description in terms of polynomials
$g_i$ (e.g. coming from physical measurements and/or estimations)
and set ${\bf V}(q)$ has a simple description (in the sense that the degree of $q$
is small, say less than 10). Minimization
of the volume of ${\bf V}(q)$ means that we want ${\bf V}(q)$ to capture most of
the geometric features of $\bf K$. 

If $\bf K$ is convex and $q$ is quadratic, then the infimum of problem (\ref{opt})
is attained, and there is a unique (convex) ellipsoid ${\bf V}(q)$ of minimum volume that
contains $\bf K$, called L\"owner-John ellipsoid.
It can be computed by convex optimization, see e.g. \cite[\S 4.9]{BenNem:01}.

In general, without convexity assumptions on $\bf K$, a solution to problem (\ref{opt})
is not unique. There is also no guarantee that the computed set ${\bf V}(q)$ is convex.
Optimization problem (\ref{opt}) is nonlinear and semi-infinite, in the sense
that we optimize over the finite-dimensional vector space ${\bf P}_d$
but subject to an infinite number of constraints (to cope with set inclusions).

If we denote by $\pi_d(x)$ a (column vector) basis of monomials of
degree up to $d$, we can write $q(x) = \pi^T_d(x) q$ where $q$ is a vector
of coefficients of given size. In vector space ${\bf P}_d$, the set
${\bf Q}:=\{q : \pi^T_d(x)q \leq 1\}$ is by definition the polar of the bounded set
$\{\pi_d(x) : \|x\| \leq R\}$ where $R>0$ is a constant chosen sufficiently
large so that all vectors $x \in {\bf V}(q)$ have norm less than $R$. As the polar
of a compact set whose interior contains the origin, set $\bf Q$ is compact, see e.g.\ \cite{Lay:92}. 
It follows that the feasible set of problem (\ref{opt})
is compact, and since the objective function is continuous, the infimum
in problem (\ref{opt}) is attained. 

\section{Convex conic formulation}

In this section it is assumed that $q$ is a nonnegative homogeneous polynomial, or form, of
even degree $d=2\delta$ in $n$ variables. Under this restriction, in \cite[Lemma 2.4]{Lasserre:11}
it is proved that the volume function
\[
q \mapsto \vol\:{\bf V}(q)
\]
is convex in $q$. The proof of this statement relies on the striking observation \cite{MorSha:10} that
\[
\vol\:{\bf V}(q) = C_d \int_{{\mathbb R}^n} e^{-q(x)}dx
\]
where $C_d$ is a constant depending only on $d$.
Note also that boundedness of ${\bf V}(q)$ implies that $q$ is nonnegative,
since if there is a point $x_0 \in {\mathbb R}^n$ such that $q(x_0)<0$, and hence $x_0 \in {\bf V}(q)$,
then by homogeneity of $q$ it follows that $q(\lambda x_0)=\lambda^{2\delta} q(x_0)<0$ for all $\lambda$
and hence $\lambda x_0 \in {\bf V}(q)$ for all $\lambda$ which contradicts
boundedness of ${\bf V}(q)$. This implies that problem (\ref{opt}), once restricted to nonnegative forms,
is a convex optimization problem.

In \cite[Lemma 2.4]{Lasserre:11} explicit expressions are given for the
first and second order derivatives of the volume function, in terms
of the moments
\begin{equation}\label{mom}
\int_{{\mathbb R}^n} x^{\alpha} e^{-q(x)}dx
\end{equation}
for $\alpha \in {\mathbb N}^n$, $|\alpha|\leq 2d$.
In an iterative algorithm solving convex problem (\ref{opt}),
one should then be able to compute repeatedly
and quickly integrals of this kind, arguably a difficult task.

When $q$ is not homogeneous, we do not know under which
conditions on $q$ the volume function ${\bf V}(q)$ is convex in $q$.

Motivated by these considerations, in the remainder of the paper we propose a simpler approach
to solving problem (\ref{opt}) which is not restricted to
forms, and which does not require the potentially intricate
numerical computation of moments (\ref{mom}).
Our approach is however only a heuristic, in the sense that
we do not provide guarantees of solving problem (\ref{opt})
globally.

\section{$L^1$-norm minimization}

Let us write ${\bf V}(q)$ as a polynomial superlevel set
\[
{\bf U}(p):={\bf V}(q)=\{x \in {\mathbb R}^n : p(x):=2-q(x) \geq 1\}.
\]
with polynomial
\begin{equation}\label{gram}
p(x)=\pi^T_\delta(x) P \pi_\delta(x)
\end{equation}
expressed as a quadratic form in a given (column vector) basis $\pi_{\delta}(x)$ of monomials of
degree up to $\delta:=\lceil\frac{d}{2}\rceil$, with symmetric Gram matrix $P$.
Then, optimization problem (\ref{opt}) reads
\begin{equation}\label{optp}
\begin{array}{rcll}
v^*_d & := & \min_{p \in {\bf P}_d} & \vol\:{\bf U}(p) \\
&& \mathrm{s.t.} & {\bf K} \subset {\bf U}(p). \\
\end{array}
\end{equation}
Note that in problem (\ref{optp}) we can indifferently optimize over coefficients of $p$
or coefficients of matrix $P$, since they are related linearly
via constraint (\ref{gram}).

Since $\bf K$ is compact by assumption and ${\bf U}(p)$ is compact for problem (\ref{optp})
to have a finite minimum, we suppose that we are given a compact semialgebraic set
\[
{\bf B}:=\{x \in {\mathbb R}^n : b_i(x) \geq 0, \: i=1,2,\ldots,m_b \}
\]
such that ${\bf U}(p) \subset {\bf B}$ and hence
\[
{\bf U}(p)=\{x \in {\bf B} : p(x) \geq 1\}.
\]
The particular choice of polynomials $b_i$ will be specified later on.
Now, observe that by definition
\[
p(x) \geq I_{{\bf U}(p)}(x) \:\:\mathrm{on}\:\: {\mathbb R}^n
\]
and hence, integrating both sides we get
\[
\int_{\bf B} p(x)dx \geq \int_{\bf B} I_{{\bf U}(p)}(x)dx = \vol\:{\bf U}(p),
\]
an inequality known as Chebyshev's inequality, 
widely used in probability, see e.g. \cite[\S 2.4.9]{AshDol:00}.
If polynomial $p$ is nonnegative on $\bf B$
then the above left-hand side is the $L^1$-norm of $p$, and
the inequality becomes
\begin{equation}\label{cheb}
\|p\|_1 \geq \vol\:{\bf U}(p).
\end{equation}

Now consider the following $L^1$-norm minimization problem:
\begin{equation}\label{l1}
\begin{array}{rcll}
w^*_d & := & \min_{p \in {\bf P}_d} & \|p\|_1 \\
&& \mathrm{s.t.} & p \geq 0 \:\:\mathrm{on}\:\: {\bf B} \\
&&& p \geq 1 \:\:\mathrm{on}\:\: {\bf K}.
\end{array}
\end{equation}

\begin{lemma}\label{cvg}
The minimum of problem (\ref{l1}) monotonically
converges from above to the minimum of problem (\ref{optp}), i.e.
$w^*_{d-1} \geq w^*_d \geq v^*_d$ for all $d$, and
$\lim_{d\to\infty} w^*_d = \lim_{d\to\infty} v^*_d$.
\end{lemma}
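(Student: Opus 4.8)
The plan is to establish the three inequalities $w^*_{d-1} \geq w^*_d$, $w^*_d \geq v^*_d$, and then the limit statement. The middle inequality is immediate: any $p$ feasible for problem (\ref{l1}) satisfies $p \geq 1$ on $\bf K$, hence ${\bf K} \subset {\bf U}(p)$, so $p$ is feasible for problem (\ref{optp}); moreover $p \geq 0$ on $\bf B \supset {\bf U}(p)$ means Chebyshev's inequality (\ref{cheb}) applies, giving $\|p\|_1 \geq \vol\,{\bf U}(p) \geq v^*_d$. Taking the infimum over feasible $p$ yields $w^*_d \geq v^*_d$. The monotonicity $w^*_{d-1} \geq w^*_d$ follows because ${\bf P}_{d-1} \subset {\bf P}_d$: every $p$ feasible at degree $d-1$ is feasible at degree $d$ with the same objective value, so enlarging the feasible set can only decrease the minimum.

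The substantive part is the limit $\lim_{d\to\infty} w^*_d = \lim_{d\to\infty} v^*_d$. Since $w^*_d \geq v^*_d \geq v^*_\infty := \lim_d v^*_d$ (the sequence $v^*_d$ is also nonincreasing by the same nesting argument, and bounded below by $\vol\,{\bf K}$), and $w^*_d$ is nonincreasing, both limits exist; it suffices to show $\lim_d w^*_d \leq \lim_d v^*_d$. First I would fix $\varepsilon > 0$ and pick $d$ and a polynomial $q$ with ${\bf K} \subset {\bf V}(q)$ and $\vol\,{\bf V}(q) \leq v^*_d + \varepsilon$; set $p = 2 - q$ so that ${\bf U}(p) = {\bf V}(q)$. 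The difficulty is that this $p$ need not be nonnegative on $\bf B$, nor does $\|p\|_1$ directly control $\vol\,{\bf U}(p)$ — the slack in Chebyshev's inequality (\ref{cheb}) can be large. The key idea is that raising $p$ to a high power tightens Chebyshev: since $p \geq 1$ on ${\bf U}(p)$ and $p < 1$ on ${\bf B} \setminus {\bf U}(p)$, the function $p^{2k}$ (which is automatically nonnegative, and still $\geq 1$ on $\bf K$) converges pointwise to $I_{{\bf U}(p)}$ on the set where $p \neq 1$ (and $\{x : p(x) = 1\}$ has measure zero for generic $q$, or one perturbs $q$ slightly), while being dominated on the compact set $\bf B$; by dominated convergence $\int_{\bf B} p^{2k}\,dx \to \vol\,{\bf U}(p)$.

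Thus for $k$ large enough, the polynomial $\tilde p := p^{2k}$ has degree $2kd$, satisfies $\tilde p \geq 0$ everywhere (in particular on $\bf B$), $\tilde p \geq 1$ on $\bf K$, and $\|\tilde p\|_1 = \int_{\bf B} p^{2k}\,dx \leq \vol\,{\bf U}(p) + \varepsilon = \vol\,{\bf V}(q) + \varepsilon \leq v^*_d + 2\varepsilon$. Hence $w^*_{2kd} \leq v^*_d + 2\varepsilon$, and since $w^*_{d'}$ is nonincreasing in $d'$ this gives $\lim_{d'\to\infty} w^*_{d'} \leq v^*_d + 2\varepsilon \leq v^*_\infty + \varepsilon'$ for suitable choices; letting $\varepsilon \to 0$ yields $\lim_d w^*_d \leq \lim_d v^*_d$, completing the proof. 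The main obstacle to make rigorous is the measure-zero boundary issue $\{p = 1\}$ and ensuring the near-optimal $q$ can be chosen so that this set is null — one handles this by an arbitrarily small scaling of $q$ (replacing $q$ by $(1+\eta)q$ shrinks ${\bf V}(q)$ slightly but keeps ${\bf K}$ inside after an initial small dilation, or one argues directly that the superlevel set at level exactly $1$ contributes nothing generically), which costs only $O(\eta)$ in volume and is absorbed into $\varepsilon$.
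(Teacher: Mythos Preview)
Your treatment of the monotonicity $w^*_{d-1}\geq w^*_d$ and of the inequality $w^*_d\geq v^*_d$ via Chebyshev is correct and matches the paper's reasoning. The limit argument, however, contains a genuine error.

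You claim that for a near-optimal $p$ in problem~(\ref{optp}) the powers $p^{2k}$ converge pointwise to $I_{{\bf U}(p)}$ and are dominated on~${\bf B}$, so that $\int_{\bf B} p^{2k}\,dx \to \vol\,{\bf U}(p)$. This is false. On the interior of ${\bf U}(p)=\{x:p(x)\geq 1\}$ one has $p(x)>1$, hence $p(x)^{2k}\to\infty$, not to $1$; likewise on the part of ${\bf B}\setminus{\bf U}(p)$ where $p<-1$ (nothing in problem~(\ref{optp}) prevents this) one again has $p^{2k}\to\infty$. Thus $p^{2k}$ neither converges to $I_{{\bf U}(p)}$ nor admits an integrable dominating function, and in fact $\int_{\bf B} p^{2k}\,dx\to\infty$ whenever $\{|p|>1\}$ has positive measure. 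The intuition ``raising to a high power tightens Chebyshev'' is valid only on the region $\{|p|<1\}$; on $\{|p|>1\}$ it works in the wrong direction.

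The paper closes the gap differently: it observes that both sequences are squeezed from below by $\vol\,{\bf K}$ (since ${\bf K}\subset{\bf U}(p)$ for any feasible $p$), and then argues directly that $w^*_d\to\vol\,{\bf K}$ because polynomials on the compact set ${\bf B}$ can approximate the indicator $I_{\bf K}$ from above in $L^1$-norm. Concretely, one takes a continuous $f$ with $I_{\bf K}\leq f\leq 1$ and $\int_{\bf B} f\leq \vol\,{\bf K}+\varepsilon$, approximates $f$ uniformly by a polynomial via Stone--Weierstrass, and shifts up slightly to restore the constraints $p\geq 1$ on ${\bf K}$ and $p\geq 0$ on ${\bf B}$. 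If you wish to salvage your composition idea, you would need to replace $t\mapsto t^{2k}$ by a univariate polynomial $r_k$ that approximates $I_{[1,M]}$ from above on $[-M,M]$ (with $M=\max_{\bf B}|p|$) and satisfies $r_k\geq 0$ on $[-M,M]$, $r_k\geq 1$ on $[1,M]$; then $r_k\circ p$ is feasible and $\int_{\bf B} r_k(p)\,dx\to\vol\,{\bf U}(p)$. But this is essentially the same polynomial-approximation-of-an-indicator step, now in one variable.
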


\begin{proof}
The graph of polynomial $p$ lies above $I_{\bf K}$, the indicator
function of set $\bf K$, while being nonnegative on $\bf B$,
so minimizing the $L^1$-norm of $p$ on $\bf B$ yields
an upper approximation of $I_{\bf K}$. Monotonicity of
the sequence $w^*_d$ follows immediately since
polynomials of degree $d+1$ include polynomials
of degree $d$. When its degree increases, $p$ converges in $L^1$-norm
to $I_{\bf K}$, hence $\|p\|_1$ converges to $\vol\:{\bf K}$.
The convergence is pointwise almost everywhere,
and almost uniform, but not uniform since $I_{\bf K}$
is discontinuous on $\bf B$.
\end{proof}

Note that this $L^1$-norm minimization
approach was originally proposed in \cite{HeLaSa:09}
to compute numerically the volume and moments
of a semialgebraic set.

\section{Trace minimization}

In this section we give a geometric interpretation
of problem (\ref{l1}).
First note that the objective function reads
\begin{eqnarray*}
\|p\|_1 &=& \int_{\bf B} p(x)dx = \int_{\bf B} \pi^T_\delta(x)P\pi_\delta(x)dx\\
&=& \trace\left(P\int_{\bf B} \pi_\delta(x)\pi^T_\delta(x)dx\right)
= \trace\:PM
\end{eqnarray*}
where
\[
M:=\int_{\bf B} \pi_\delta(x)\pi^T_\delta(x)dx
\]
is the matrix of moments of the Lebesgue measure on $\bf B$ in basis $\pi_\delta(x)$.
In equation (\ref{gram}) if the basis is chosen such that its entries
are orthonormal w.r.t. the (scalar product induced by the)
Lebesgue measure on $\bf B$, then $M$ is the
identity matrix and inequality (\ref{cheb}) becomes
\begin{equation}\label{trace}
\trace\:P \geq \vol\:{\bf U}(p)
\end{equation}
which indicates that, under the above constraints,
minimizing the trace of the Gram matrix $P$
entails minimizing the volume of ${\bf U}(p)$.

The choice of polynomials $b_i$ in the definition of the bounding
set $\bf B$ should be such that the objective function in problem
(\ref{l1}) is easy to compute. If
\[
p(x)=\pi^T_d(x) p = \sum_\alpha p_\alpha [\pi_d(x)]_\alpha
\]
then
\[
\int_{\bf B} p(x)dx = \sum_\alpha p_\alpha \int_{\bf B} [\pi_d(x)]_{\alpha}dx
= \sum_\alpha p_\alpha y_\alpha
\]
and we should be able to compute easily the moments
\[
y_\alpha := \int_{\bf B} [\pi_d(x)]_{\alpha} dx
\]
of the Lebesgue measure on $\bf B$ w.r.t. basis $\pi_d(x)$.
This is the case e.g. if $B$ is a box.
\vskip .1in

\begin{remark}[Minimum trace heuristic for ellipsoids]
Note that, in the case of quadratic polynomials, i.e. $d=2$, we retrieve
the classical trace heuristic used for volume minimization,
see e.g. \cite{DuPoWa:96}. If $B=[-1,1]^n$ then the basis $\pi_1(x)=\frac{\sqrt{6}}{2}x$
is orthonormal w.r.t. the Lebesgue measure on $B$ and
$\|p\|_1 = \frac{3}{2}\trace\:P$. The constraint that $p$ is
nonnegative on ${\bf B}$ implies that the curvature of the boundary
of ${\bf U}(p)$ is nonnegative, hence that ${\bf U}(p)$ is convex.
\end{remark}
\vskip .1in

In \cite{MaLaBo:05}, the authors restricted the search to convex
polynomial sublevels ${\bf U}(p)={\bf V}(q)=\{x : q(x) \leq 1\}$ by enforcing positive
semidefiniteness of the Gram matrix of the quadratic Hessian form of $q$.
They proposed to maximize (the logarithm of) the determinant of the
Gram matrix, justifying this choice by explaining that this
increases the curvature of the polynomial sublevel set along all
directions, and hence minimizes the volume. Supported by Lemma \ref{cvg}
and the above discussion, we came to the consistent conclusion that
minimizing the trace of the Gram matrix of $p$, that is,
maximizing the trace of the Gram matrix of $q$, is a relevant heuristic
for volume minimization. Note however that in our approach
\textit{we do not enforce convexity} of ${\bf U}(p)$.

\section{Handling constraints}

If ${\bf K}=\{x : g_i(x) \geq 0, \:i=1,2,\ldots,m\}$ is a general
semialgebraic set then we must ensure that polynomial $p-1$ is nonegative
on $\bf K$, and for this we can use Putinar's Positivstellensatz and
a hierarchy of finite-dimensional convex LMI relaxations which are
linear in the coefficients of $p$. More specifically, we write
$p-1= r_0 + \sum_i r_i g_i$ where $r_0, r_1, \ldots, r_m$ are
polynomial sum-of-squares of given degree, to be found.
For each fixed degree, the problem of finding such polynomials
is an LMI, see e.g. \cite[Section 3.2]{Lasserre:11}.
The constraint that $p$ is nonnegative on $\bf B$ can be ensured
similarly.

A particularly interesting case is when  $\bf K$ is a discrete set, i.e. a union of points $x_i \in {\mathbb R}^n$,
$i=1,\ldots,N$. Indeed, in this case the inclusion constraint ${\bf K} \subset {\bf U}(p)$ is equivalent
to a finite number of inequalities $p(x_i) \geq 1$,
$i=1,\ldots,N$ which are \textit{linear} in the coefficients of $p$.
Similarly, the constraint that $p$ is nonnegative on $\bf B$ can
be handled by linear inequalities $p(x_j) \geq 0$ enforced at
a dense grid of points $x_j \in {\bf B}$, $j=1,\ldots,M$, for $M$
sufficiently large. Note that, with this pure linear programming (LP) approach,
it is not guaranteed that $p$ is nonnegative on $\bf B$, but what matters
primarily is that ${\bf K} \subset {\bf U}(p)$, which is indeed guaranteed.
This purely LP formulation allows to deal with problems with rather large $N$.

\section{Examples}

We illustrate the proposed approach for the case when $\bf K$ is a discrete set,
a union of points of ${\mathbb R}^n$. For our numerical examples,
we have used the YALMIP interface for Matlab to model the
LMI optimization problem (\ref{opt}), and the SDP solver SeDuMi
to solve numerically the problem. Since the degrees of
the semialgebraic sets we compute are typically low (say less than 20),
we did not attempt to use appropriate polynomial bases (e.g. Chebyshev
polynomials) to improve the quality and
resolution of the optimization problems, see however \cite{HeLaSa:09}
for a discussion on these numerical matters.

\subsection{Line ($n=1$)}

\begin{figure}[h!]
\centering
\includegraphics[width=0.48\textwidth]{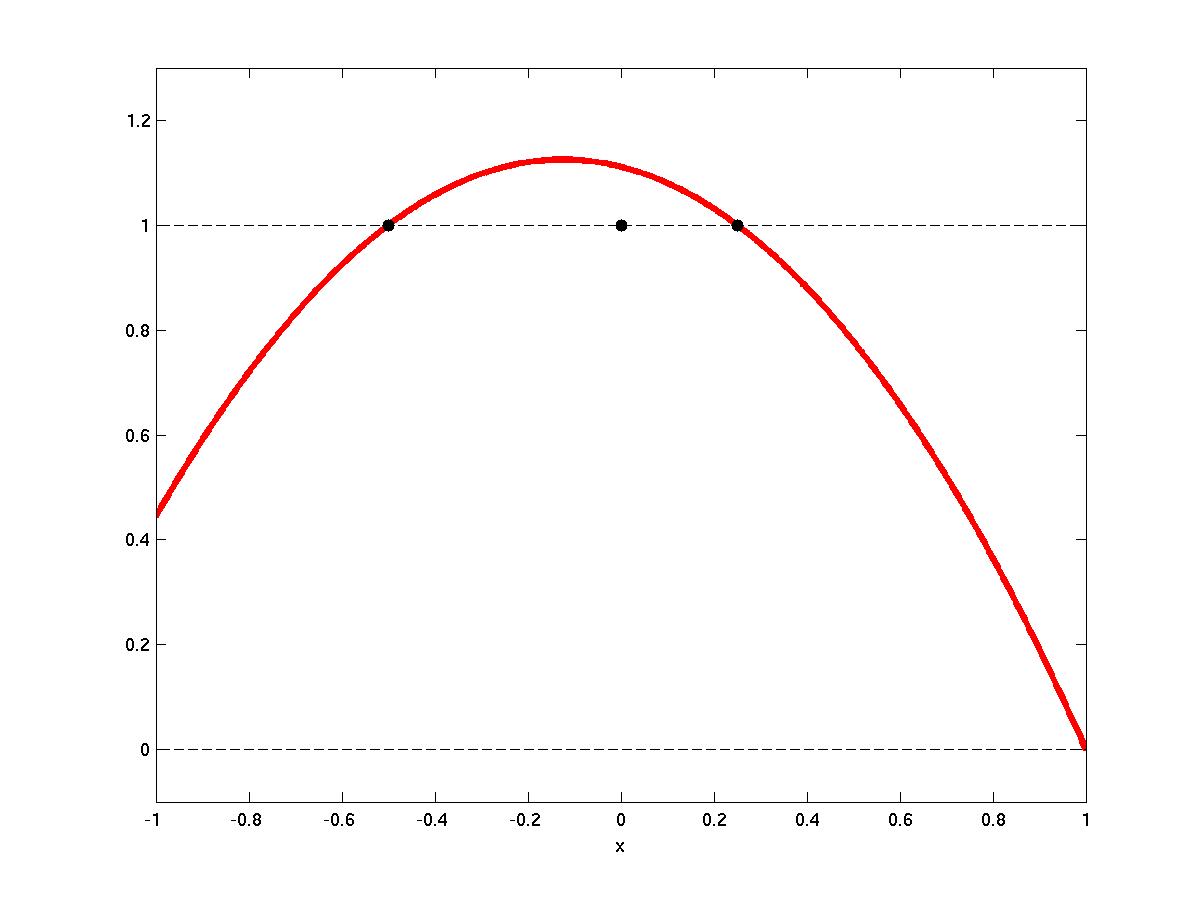}
\includegraphics[width=0.48\textwidth]{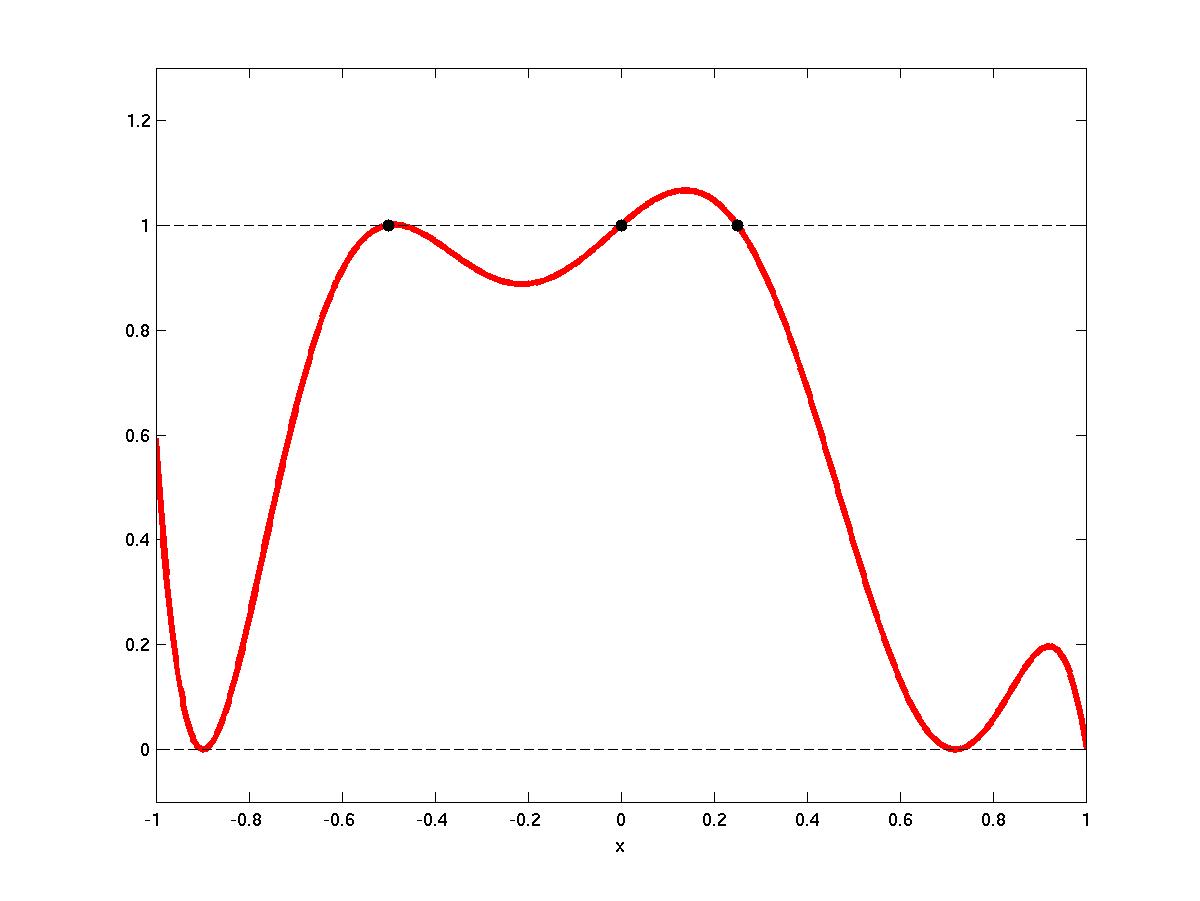}\\
\includegraphics[width=0.48\textwidth]{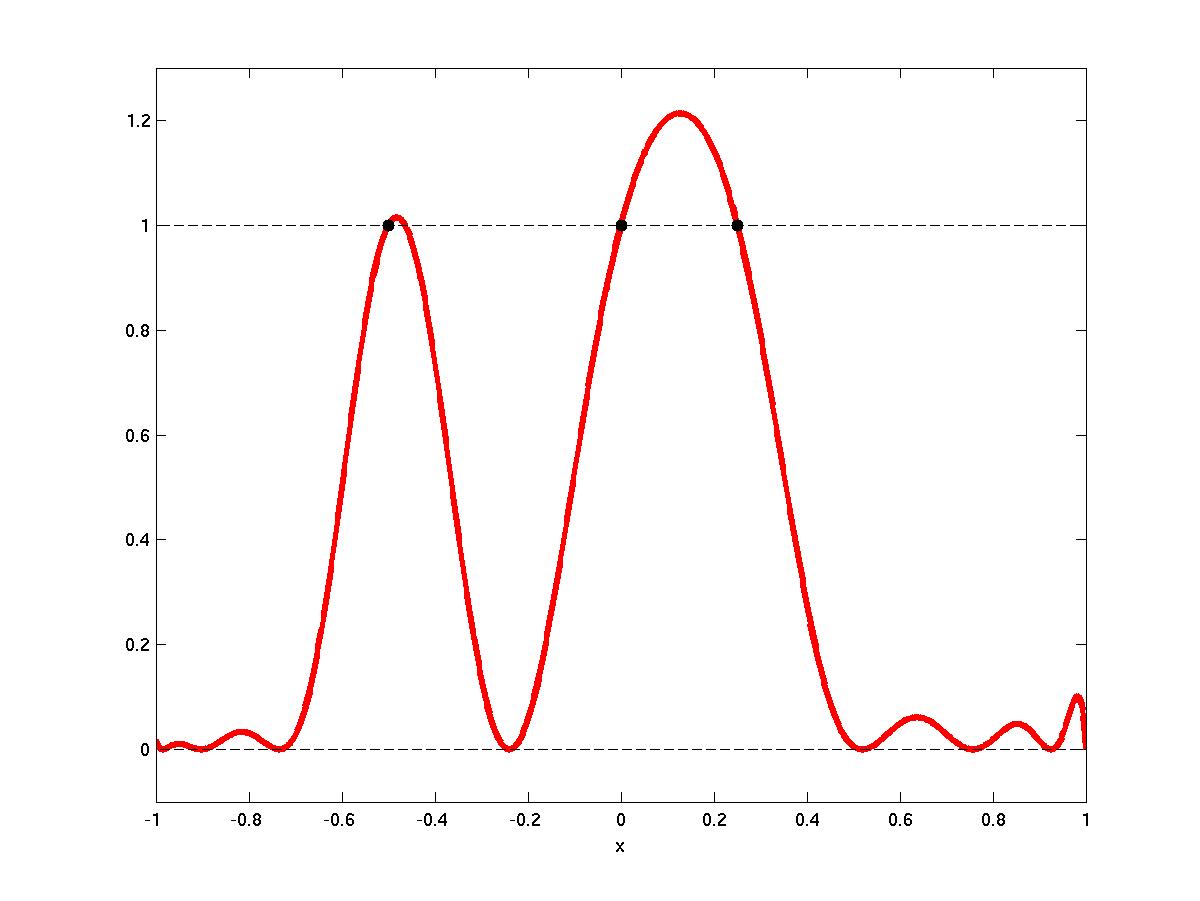}
\includegraphics[width=0.48\textwidth]{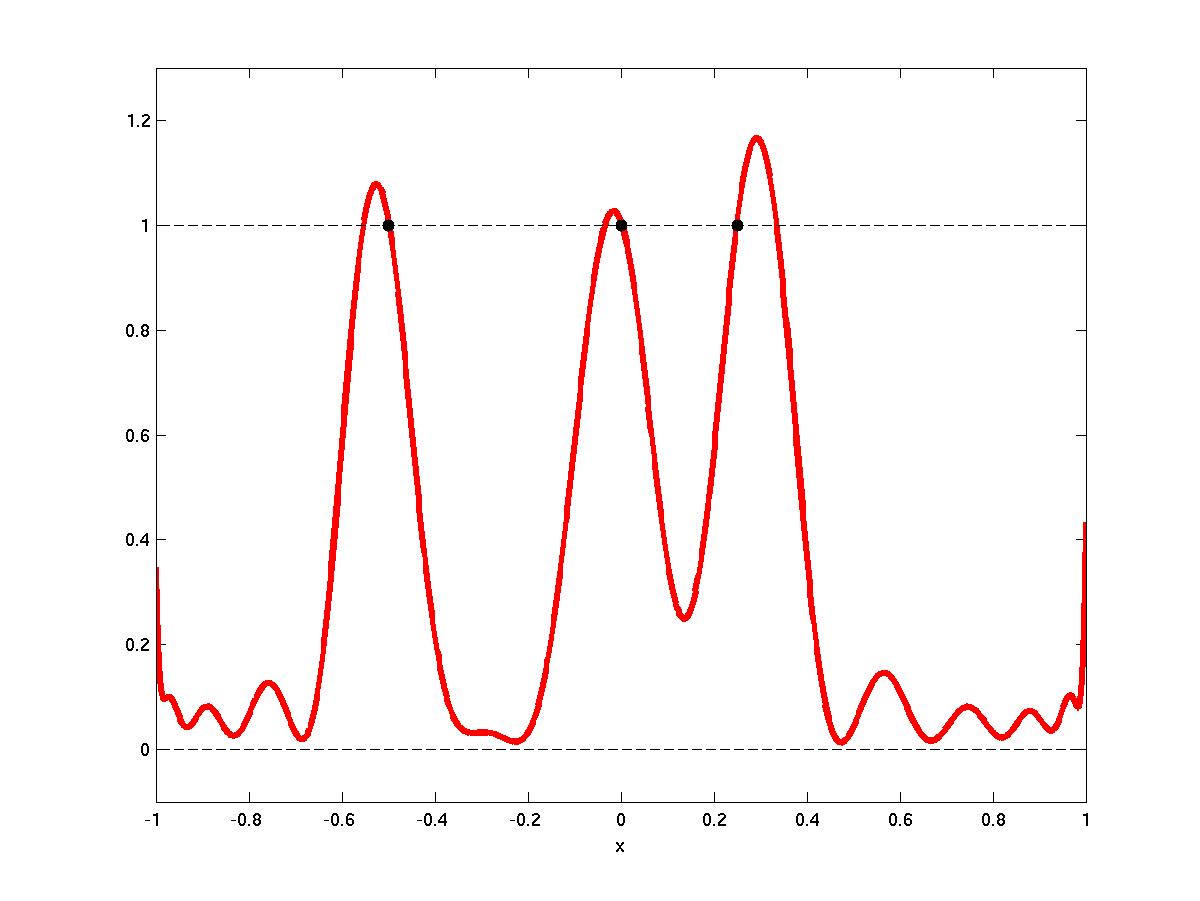}\\
\caption{Minimum $L^1$-norm polynomials $p$ (red) such that $p\geq 0$ on $[-1,\:1]$ and
$p\geq 1$ at the 3 points (black), for degree 2 (upper left), 7 (upper right), 17 (lower left)
and 26 (lower right).\label{line}}
\end{figure}

To illustrate the behavior of the proposed optimization procedure, we first consider ${\bf B}=[-1,\:1]$ and $K=\{-\frac{1}{2},0,\frac{1}{4}\}$.
On Figure \ref{line} we represent the solutions $p$ of degrees 2, 7, 17 and 26
of minimization problem (\ref{l1}). We observe that the superlevel set
${\bf U}(p)=\{x \in {\bf B} : p(x) \geq 1\}$ is simply connected for degree 2,
doubly connected for degree 7, and triply connected for degrees 17 and 26.
Note that, as the degree increases of $p$, the length of the intervals for which
$p(x)>1$ tends to zero. 
This is consistent with the fact that the volume of a finite set is zero.
We also observe that polynomial $p$ shows increasing oscillations
for increasing degrees, a typical feature of such discontinuous function
approximations.

\subsection{Plane ($n=2$)}

\begin{figure}[h!]
\centering
\includegraphics[width=0.48\textwidth]{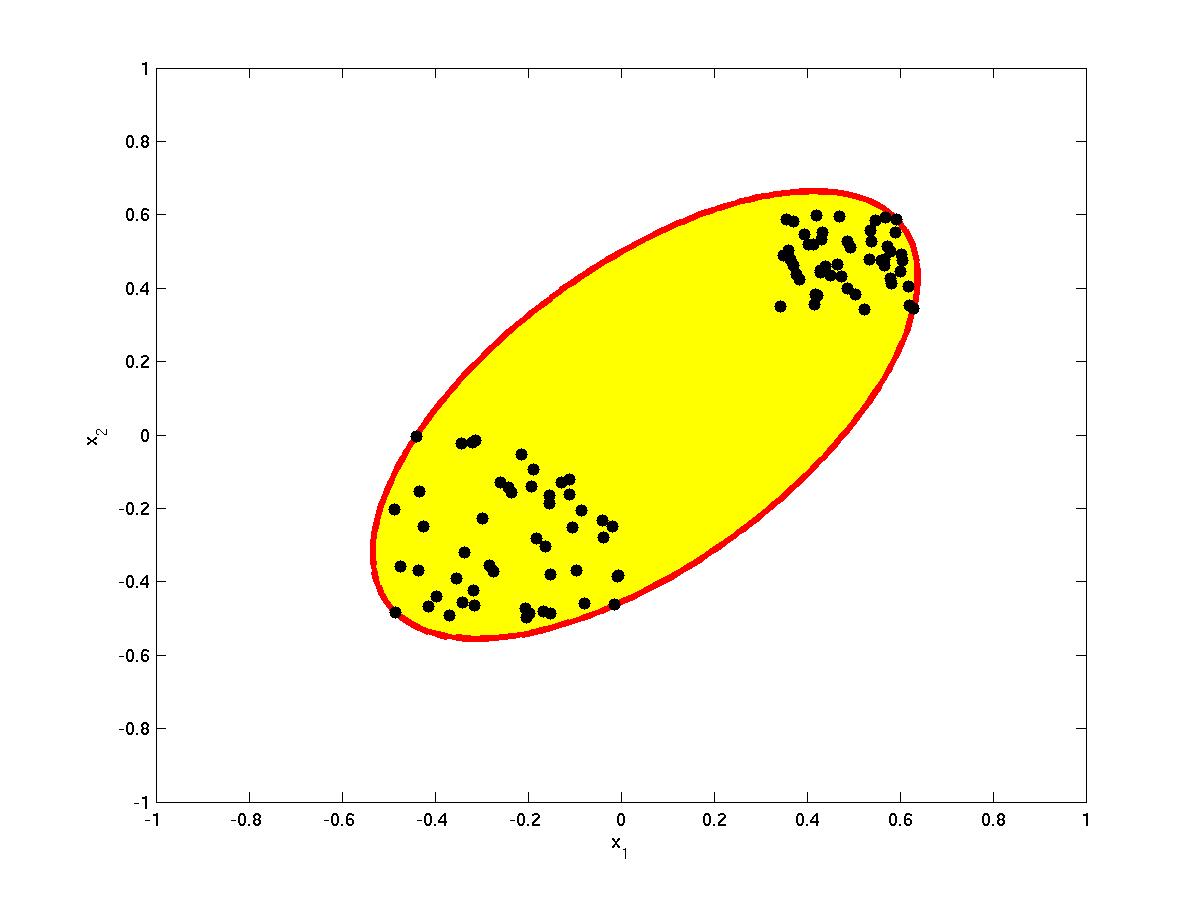}
\includegraphics[width=0.48\textwidth]{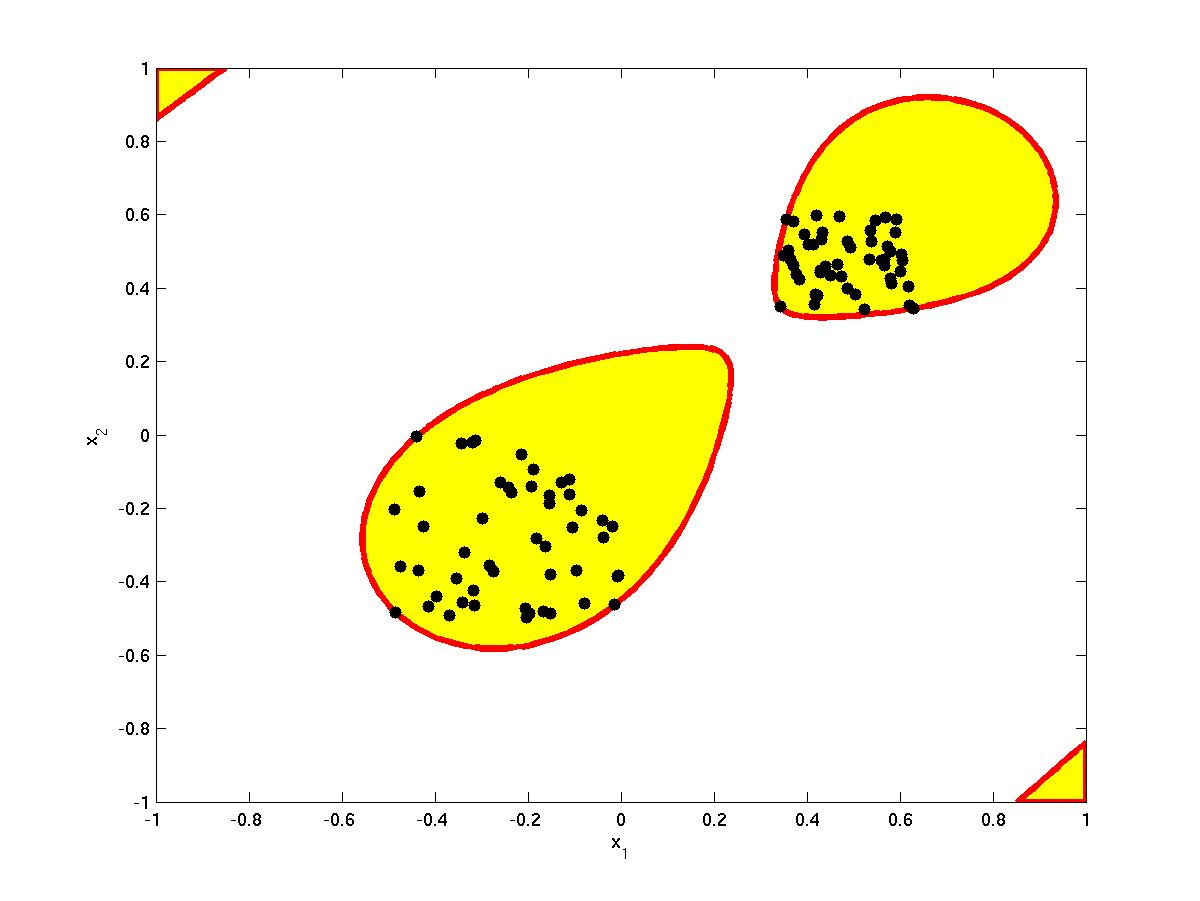}\\
\includegraphics[width=0.48\textwidth]{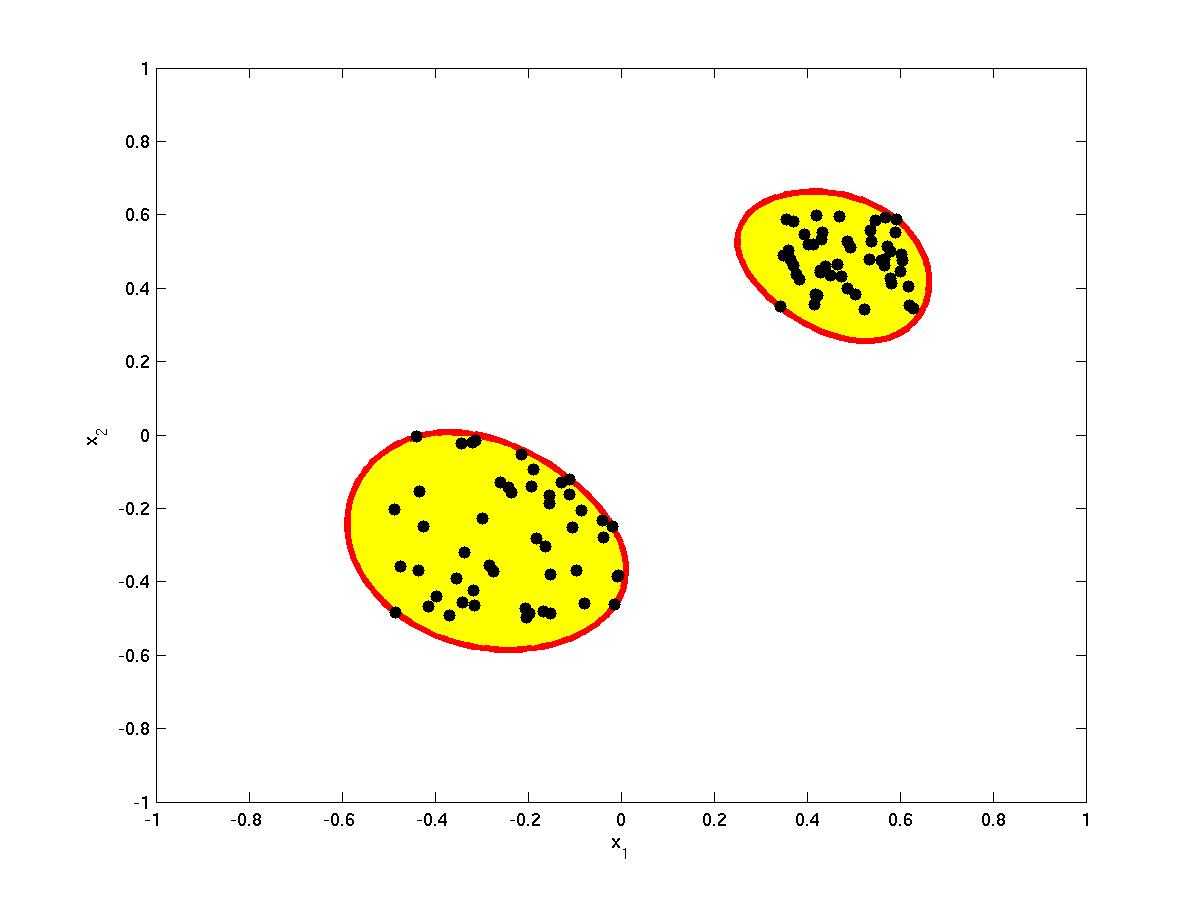}
\includegraphics[width=0.48\textwidth]{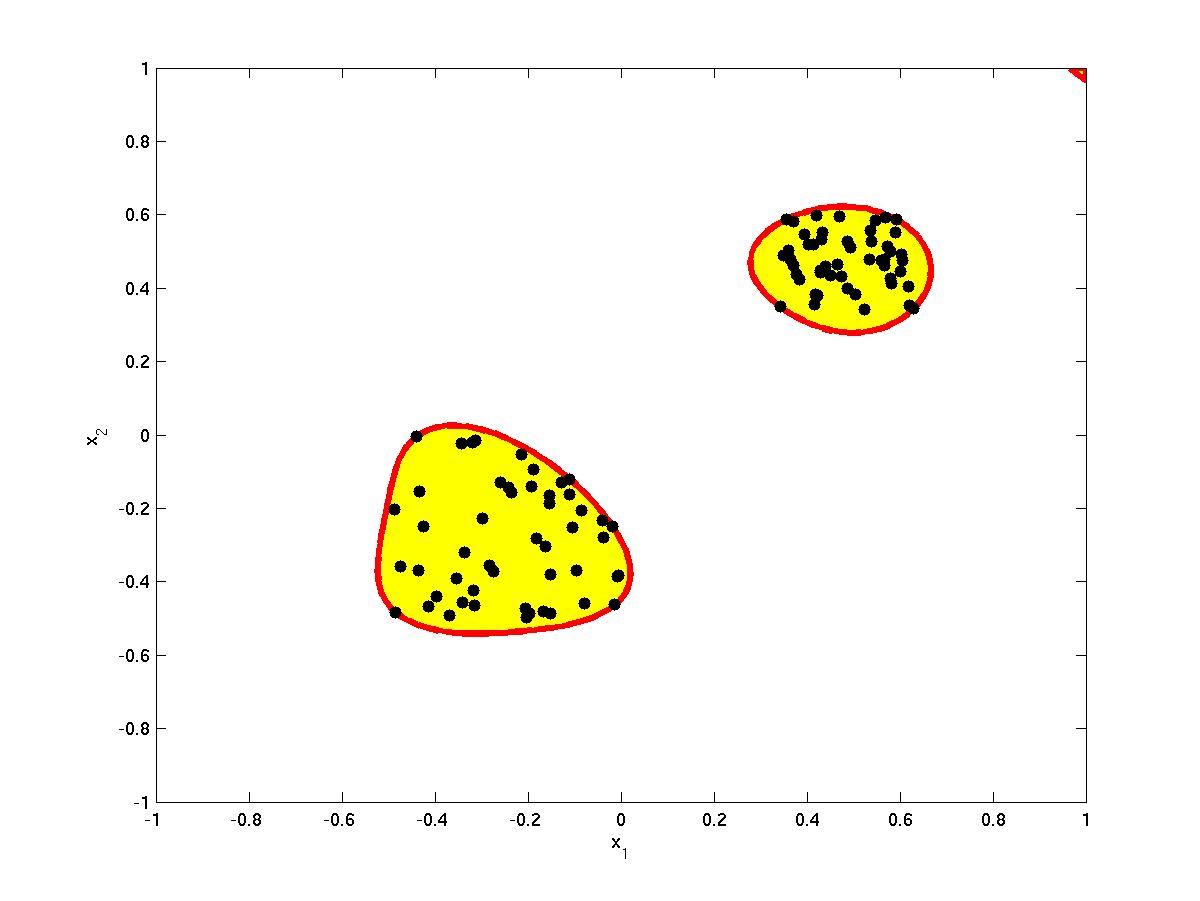}\\
\caption{Minimum $L^1$-norm polynomials $p$ (red) and low surface semialgebraic sets (yellow)
such that $p\geq 0$ on $[-1,\:1]^2$ and $p\geq 1$ at 100 points (black), for degree 2 (upper left),
5 (upper right), 9 (lower left) and 14 (lower right).\label{plane}}
\end{figure}

In ${\bf B}=[-1,\:1]^2$ we consider two clouds of 50 points each, i.e. $N=100$.
On Figure \ref{plane} we represent the solutions $p$ of degrees 2, 5, 9 and 14
of minimization problem (\ref{l1}). Here too we observe that increasing the
degree of $p$ allows to disconnect set ${\bf U}(p)$.
The side effects near the border of ${\bf B}$ on the top right figure
can be removed by enlarging the bounding set ${\bf B}$.

\subsection{Space ($n=3$)}

\begin{figure}[h!]
\centering
\includegraphics[width=0.35\textwidth]{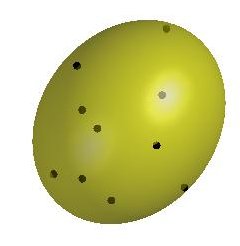} 
\includegraphics[width=0.3\textwidth]{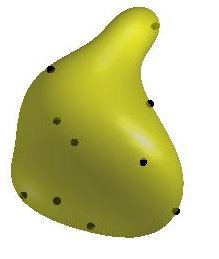} 
\includegraphics[width=0.31\textwidth]{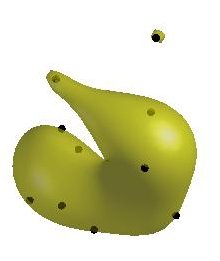}
\caption{Including the same 10 space points (black) in low volume semialgebraic sets (yellow)
of degree 4 (left), 10 (center), 14 (right).\label{space}}
\end{figure}

In ${\bf B}=[-1,\:1]^3$ we consider $N=10$ points.
The solutions $p$ of degrees 4, 10, and 14 of minimization problem (\ref{l1})
is depicted in Figure \ref{space}. Here too we observe that increasing the
degree of $p$ allows to capture point clusters in distinct
connected components. 

It should be pointed out that all the semialgebraic
sets in the previous examples were computed in a few seconds of CPU time on a standard PC.

\section{Conclusion}
In this paper, we proposed a simple technique for approximating a given set of complicated shape by means of the polynomial sub level set of minimal size that contains it.
The proposed approximation has been shown to be convex in the case the polynomial is assumed to be homogeneous. Then, a tractable relaxation based on Chebychev's inequality has been introduced. Interestingly, this relaxation reduces to the classical trace minimization heuristic in the case of quadratic polynomials, thus indirectly providing an intuitive explanation to this widely used criterion.

Ongoing research is devoted to utilize the proposed approximation to construct new classes of set-theoretic filters, in the spirit of the works \cite{AlBrCa:05,ElGCal:01}.

\section*{Acknowlegdments}

This work was partially supported by a Bilateral Project CNRS-CNR.

\end{document}